\newcommand{\monthyear}[1]{%
  \def\@monthyear{\uppercase{#1}}}
\newcommand{\volnumber}[1]{%
  \def\@volnumber{\uppercase{#1}}}
\def\ps@plain{\ps@empty
  \def\@oddfoot{\@monthyear \hfil \thepage}%
  \def\@evenfoot{\thepage \hfil \@volnumber}}
\def\ps@firstpage{\ps@plain}
\def\ps@headings{\ps@empty
  \def\@evenhead{%
    \setTrue{runhead}%
    \def\thanks{\protect\thanks@warning}%
    \uppercase{The Fibonacci Quarterly}\hfil}%
  \def\@oddhead{%
    \setTrue{runhead}%
    \def\thanks{\protect\thanks@warning}%
    \hfill\uppercase{Determinants of Rising Powers of Second Order Recurrence}}%
  \let\@mkboth\markboth
  \def\@evenfoot{%
    \thepage \hfil \@volnumber}%
  \def\@oddfoot{%
    \@monthyear \hfil \thepage}%
  }%
\theoremstyle{plain}
\numberwithin{equation}{section}
\newtheorem{thm}{Theorem}[section]
\begin{document}
\monthyear{Month Year}
\volnumber{Volume, Number}
\setcounter{page}{1}

\title{Determinants of Rising Powers of Second Order Linear Recurrence Entries by Means of the Desnanot-Jacobi Identity}
\author{Aram Tangboonduangjit}
\address{Mahidol University International College\\
                Nakhonpathom\\
                73170, Thailand}
\email{aram.tan@mahidol.edu}
\author{Thotsaporn Thanatipanonda}
\address{Mahidol University International College\\
                Nakhonpathom\\
                73170, Thailand}\email{thotsaporn.tha@mahidol.ac.th}

\begin{abstract}
We apply the Desnanot-Jacobi identity to give an alternative proof of the determinants whose entries are rising powers of the Fibonacci numbers given by Prodinger. We then generalize the determinants to include entries that are rising powers of the terms in a second order linear recurrence. 
\end{abstract}

\maketitle

\section{Introduction}
In 1966, Carlitz \cite{carlitz} gave the following curious formula of the determinant whose entries are powers of the Fibonacci numbers:
\begin{equation}\label{eq0} 
\abs{F_{n+i+j}^r}_{0 \leq i,j \leq r}
=(-1)^{(n+1){r+1 \choose 2}}(F_1^r F_2^{r-1}\cdots F_r)^2\cdot\prod_{i=0}^r {r\choose i}.
\end{equation}
Recently, Tangboonduangjit and Thanatipanonda \cite{AramThotsaporn} have proved this result again where the indices of the entries are slightly more general and whose method of proof is different from the one provided by Carlitz.  Another recent work which is related to the formula \eqref{eq0} is by Prodinger \cite{prodinger}. He considered the determinants whose entries are the
rising powers of the Fibonacci numbers $F_m^{\langle r \rangle}$ defined by  $$F_m^{\langle r \rangle}= F_mF_{m+1}\cdots F_{m+r-1}.$$ In particular he proved the following formula: 
\begin{equation}\label{eq1} 
\abs{F_{n+i+j}^{\langle r \rangle}}_{0 \leq i,j \leq r}
=(-1)^{n{r+1 \choose 2}+\binom{r+2}{3}}(F_1 F_2 \cdots F_r)^{r+1}.
\end{equation}
Prodinger's proof employed the LU-decomposition of the matrix whose entries are the rising powers of the Fibonacci numbers. In this work, we generalize the result of Prodinger further by making the dimension of the matrix to be independent from the rising power. This results in adding one more parameter to the formula \eqref{eq1} and we simply prove the result by induction. We then generalize the entries of the determinant to include the rising powers of the terms of a second order linear recurrence with constant coefficients. We used Maple program to facilitate some computations in this work. Thanatipanonda has included particular Maple codes associated with this work at his personal website \cite{thotsaporn}.

\section{Main Result}
We apply the Desnanot-Jacobi identity which was beautifully explained in the paper of Amdeberhan and Zeilberger \cite{dodgson} to prove the following main result:

\begin{thm} \label{main}
Let $D(n,r,d) = \abs{F_{n+i+j}^{\langle r\rangle}}_{0 \leq i,j \leq d-1}$
for integers $n,r,$ and $d$ with $r \geq 0$ and $d > 0$. Then 
$$  D(n,r,d) = (-1)^{ n \binom{d}{2} + \binom{d+1}{3}  } 
\prod_{i=1}^{d-1} (F_i F_{r+1-i})^{d-i} \cdot 
\prod_{i=d-1}^{2(d-1)} F_{n+i}^{\langle r+1-d\rangle}.$$
\end{thm}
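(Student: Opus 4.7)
The plan is to apply the Desnanot--Jacobi identity (as advertised) to the $d \times d$ matrix defining $D(n,r,d)$ and then induct on $d$. Each of the four connected $(d-1)\times(d-1)$ corner minors and the central $(d-2)\times(d-2)$ minor is, after a shift of the row and column indices, a determinant of exactly the same form: deleting row $0$ and column $0$ gives $D(n+2,r,d-1)$, deleting row $d-1$ and column $d-1$ gives $D(n,r,d-1)$, the two mixed corner minors both yield $D(n+1,r,d-1)$, and deleting both corners gives $D(n+2,r,d-2)$. Desnanot--Jacobi therefore yields the recursion
$$D(n,r,d)\, D(n+2,r,d-2) \;=\; D(n+2,r,d-1)\, D(n,r,d-1) \;-\; D(n+1,r,d-1)^2.$$
I would induct on $d$ from the base case $d=1$, where the formula reads $D(n,r,1)=F_n^{\langle r\rangle}$, the single matrix entry. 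Taking the convention $D(\cdot,r,0)=1$ subsumes $d=2$ into the same scheme; alternatively, $d=2$ can be checked directly, where it reduces to d'Ocagne's identity $F_n F_{n+r+1}-F_{n+1}F_{n+r}=(-1)^{n+1}F_r$ after pulling out shared factors.

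Substituting the inductive formula on the right, the two signs $(-1)^{n\binom{d-1}{2}+\binom{d}{3}}$ and $(-1)^{(n+2)\binom{d-1}{2}+\binom{d}{3}}$ multiply to $+1$, so the right-hand side becomes $C_{d-1}^2\bigl[P(n)P(n+2)-P(n+1)^2\bigr]$, where I abbreviate $C_{d-1}=\prod_{i=1}^{d-2}(F_i F_{r+1-i})^{d-1-i}$ and $P(m)=\prod_{j=d-2}^{2d-4}F_{m+j}^{\langle r+2-d\rangle}$. Writing $P(m)=A_{m+d-2}\cdots A_{m+2d-4}$ with $A_k=F_k^{\langle r+2-d\rangle}$, the large overlap between $P(n)P(n+2)$ and $P(n+1)^2$ factors out and the bracket collapses to
$$A_{n+d-1}A_{n+2d-3}\Bigl(\prod_{i=d}^{2d-4}A_{n+i}\Bigr)^{\!2}\cdot\bigl(A_{n+d-2}A_{n+2d-2}-A_{n+d-1}A_{n+2d-3}\bigr).$$
A second round of peeling off shared Fibonacci factors inside the $2\times 2$ bracket reduces it to an expression of the shape $F_M F_{N+s}-F_{M+s}F_N$ with $s=r+2-d$, $M=n+d-2$, $N=n+2d-3$, which I would evaluate via the extended d'Ocagne identity
$$F_M F_{N+s}-F_{M+s}F_N \;=\;(-1)^N F_s\, F_{M-N},$$
itself immediate from the addition formula $F_{a+b}=F_a F_{b+1}+F_{a-1}F_b$ together with the basic d'Ocagne. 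Using $F_{-(d-1)}=(-1)^d F_{d-1}$ to tame the negative index, this step manufactures the fresh factor $F_{r+2-d}F_{d-1}$, which is precisely the pair of Fibonacci numbers needed to upgrade $C_{d-1}$ to $C_d$. Meanwhile each $A_k$ splits as $F_k^{\langle r+1-d\rangle}\cdot F_{k+r+1-d}$, and the spare linear factors cancel against the analogous extra factors hidden inside $P_{d-2}(n+2)$; what remains, after dividing by the inductive expression for $D(n+2,r,d-2)$, rebuilds the advertised $\prod_{i=d-1}^{2(d-1)}F_{n+i}^{\langle r+1-d\rangle}$.

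The conceptual content is light; the difficulty is organizational. Three reconciliations must happen in tandem: the rising powers must deflate from $\langle r+2-d\rangle$ in the $(d-1)$-case to $\langle r+1-d\rangle$ in the $d$-case consistently across every factor; the newly produced $F_{d-1}F_{r+2-d}$ has to interlock with $C_{d-1}^2/C_{d-2}$ to regrow into $\prod_{i=1}^{d-1}(F_i F_{r+1-i})^{d-i}$; and the sign $(-1)^{(n+2)\binom{d-2}{2}+\binom{d-1}{3}}$ from the denominator $D(n+2,r,d-2)$, combined with $(-1)^N$ from d'Ocagne and $(-1)^d$ from $F_{-(d-1)}$, must reassemble into $(-1)^{n\binom{d}{2}+\binom{d+1}{3}}$. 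Each piece uses only elementary facts (the identity $\binom{d}{2}-\binom{d-2}{2}=2d-3$ and $\binom{d+1}{3}-\binom{d-1}{3}=\binom{d}{2}+\binom{d-1}{2}=(d-1)^2$ suffice for the parity tally), but keeping all three in sync is where I would expect the bulk of the effort, and where a computer-algebra check of the sort that the authors mention is genuinely helpful.
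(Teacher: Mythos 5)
Your proposal is correct and follows essentially the same route as the paper: induction on $d$ via the Desnanot--Jacobi recursion $D(n,r,d)\,D(n+2,r,d-2)=D(n,r,d-1)\,D(n+2,r,d-1)-D(n+1,r,d-1)^2$, with the surviving $2\times 2$ bracket evaluated by what the paper calls Vajda's identity $F_nF_{n+i+j}-F_{n+i}F_{n+j}=(-1)^{n+1}F_iF_j$ (your ``extended d'Ocagne'' identity with the negative index $M-N=-(d-1)$ is the same statement). The sign tally and the factor bookkeeping you outline check out and match the paper's computation.
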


\begin{proof}
The proof is by induction on $d$. For the base case $d=1$, we easily verify that 
$F_n^{\langle r \rangle} = D(n,r,1).$ For the case $d=2,$ we have 
\begin{align*}
\abs{F_{n+i+j}^{\langle r \rangle}}_{0 \leq i,j \leq 1}
& =  F_{n}^{\langle r \rangle}F_{n+2}^{\langle r\rangle}-F_{n+1}^{\langle r \rangle} F_{n+1}^{\langle r \rangle} \\
& =  F_{n+1}^{\langle r-1 \rangle}F_{n+2}^{\langle r-1 \rangle} (F_nF_{n+r+1}-F_{n+r}F_{n+1}) \\
& =  F_{n+1}^{\langle r-1 \rangle}F_{n+2}^{\langle r-1 \rangle} \cdot (-1)^{n+1} F_r F_1 \\
& = D(n,r,2),
\end{align*}

where we apply the well-known Vajda's identity:
\begin{equation}\label{Vajda}
F_nF_{n+i+j}- F_{n+i}F_{n+j}= (-1)^{n+1}F_iF_j\end{equation}
in the third equality. For the induction step, we assume that the result is true for all square matrices of order no greater than $d$. 
Then, by the Desnanot-Jacobi identity and the induction hypotheses, we have 
\begin{align*}
\abs{F_{n+i+j}^{\langle r\rangle}}_{0 \leq i,j \leq d} 
&= \dfrac{D(n,r,d)D(n+2,r,d) - D(n+1,r,d)^2}{ D(n+2,r,d-1)}  \\
&= \dfrac{1}{(-1)^{(n+2)\binom{d-1}{2}+\binom{d}{3}}} \cdot
 \dfrac{\left[\prod_{i=1}^{d-1}(F_i F_{r+1-i})^{d-i}\right]^2}
{\prod_{i=1}^{d-2}(F_i F_{r+1-i})^{d-1-i}}\\
&\cdot\dfrac{\prod_{i=d-1}^{2d-2}(F_{n+i}^{\langle r+1-d\rangle} 
 F_{n+2+i}^{\langle r+1-d\rangle}-F_{n+1+i}^{\langle r+1-d\rangle} F_{n+1+i}^{\langle r+1-d\rangle})}
{\prod_{i=d-2}^{2d-4} F_{n+2+i}^{\langle r+2-d\rangle}} \\
&= \dfrac{1}{(-1)^{(n+2)\binom{d-1}{2}+\binom{d}{3}}} \cdot
  F_{d-1}F_{r+2-d} \prod_{i=1}^{d-2}F_i F_{r+1-i} \cdot  \prod_{i=1}^{d-1}(F_i F_{r+1-i})^{d-i} \\
& \cdot   \dfrac{ \left[ \prod_{i=d}^{2d-2} F_{n+i}^{\langle r+1-d\rangle}F_{n+1+i}^{\langle r+1-d\rangle}      \right] \cdot
(F_{n+d-1}^{\langle r+1-d\rangle} F_{n+2d}^{\langle r+1-d\rangle}-F_{n+d}^{\langle r+1-d\rangle} F_{n+2d-1}^{\langle r+1-d\rangle})}
{\prod_{i=d}^{2d-2} F_{n+i}^{\langle r+2-d\rangle}} \\  
&= \dfrac{1}{(-1)^{(n+2)\binom{d-1}{2}+\binom{d}{3}}}
    \prod_{i=1}^{d-1}(F_i\cdot F_{r+1-i})^{d+1-i} 
  \cdot   \dfrac{  \prod_{i=d}^{2d-2} F_{n+i}^{\langle r+1-d\rangle}F_{n+1+i}^{\langle r+1-d\rangle} }
{\prod_{i=d}^{2d-2} F_{n+i}^{\langle r+2-d\rangle}} \\
& \cdot  F_{n+d}^{\langle r-d\rangle} F_{n+2d}^{\langle r-d\rangle} \cdot 
(F_{n+d-1}  F_{n+r+d}-F_{n+r} F_{n+2d-1}).
\end{align*} 

Applying the Vajda's identity \eqref{Vajda} to the last expression, we have the determinant equal

\begin{align*}
&\dfrac{1}{(-1)^{(n+2)\binom{d-1}{2}+\binom{d}{3}}}
    \prod_{i=1}^{d-1}(F_i F_{r+1-i})^{d+1-i} 
  \cdot   \dfrac{  \prod_{i=d}^{2d-2} F_{n+1+i}^{\langle r+1-d\rangle} }
{\prod_{i=d}^{2d-2} F_{n+r-d+1+i}}
\cdot  F_{n+d}^{\langle r-d\rangle} F_{n+2d}^{\langle r-d\rangle} \cdot 
 (-1)^{n+d}F_d F_{r+1-d} \\   
&= \dfrac{ (-1)^{n+d}}{(-1)^{(n+2)\binom{d-1}{2}+\binom{d}{3}}} 
 \left[F_d F_{r+1-d}\prod_{i=1}^{d-1}(F_i F_{r+1-i})^{d+1-i} \right]
    \left[\prod_{i=d}^{2d-2} F_{n+1+i}^{\langle r-d\rangle}   \right]
  F_{n+d}^{\langle r-d\rangle} F_{n+2d}^{\langle r-d\rangle}   \\  
&= (-1)^{n\binom{d+1}{2}+\binom{d+2}{3}}
  \prod_{i=1}^{d}(F_i F_{r+1-i})^{d+1-i} 
      \prod_{i=d-1}^{2d-1} F_{n+1+i}^{\langle  r-d\rangle}   \\ 
& = D(n,r,d+1).   
\end{align*}
This completes the proof by induction.
\end{proof}

Note that by letting $d=r+1$ in Theorem \ref{main} above, we obtain the original result of Prodinger, namely the identity \eqref{eq1}.

\section{Generalization to Second Order Linear Recurrence}
In this section, we let $W_n$ and  $U_n$ denote the second order linear recurrences 
with constant coefficients defined by $$W_0=a,\, 
W_1=b, \quad\text{and}\quad W_n = c_1W_{n-1}+c_2W_{n-2}
 \quad\text{for any integer $n \neq 0,1$}$$ and
 $$U_0=0,\, 
U_1=1, \quad\text{and}\quad U_n = c_1U_{n-1}+c_2U_{n-2}
 \quad\text{for any integer $n \neq 0,1$,}$$
 
 where $a, b, c_1$ and $c_2$ are any constants.
 
\begin{thm}
Let $E(n,r,d) = \abs{W_{n+i+j}^{\langle r\rangle}}_{0 \leq i,j \leq d-1}$
for integers $n,r,$ and $d$ with $r \geq 0$ and $d > 0$.
Then 
$$  E(n,r,d) = (-1)^{ n \binom{d}{2} + \binom{d+1}{3}  } 
c_2^{(n+d-2)\binom{d}{2}}  \Delta^{\binom{d}{2}} \cdot
\prod_{i=1}^{d-1} (U_i  U_{r+1-i})^{d-i} \cdot 
\prod_{i=d-1}^{2(d-1)} W_{n+i}^{\langle r+1-d\rangle},$$
where $\Delta = \begin{vmatrix}
  W_{1} & W_{2}  \\
  W_{0} & W_{1} \\
 \end{vmatrix} = b^2-c_1ab-c_2a^2$.  
\end{thm}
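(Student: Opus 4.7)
The plan is to mirror the inductive proof of Theorem~\ref{main}, replacing Vajda's identity \eqref{Vajda} with its analogue for the sequence $W_n$ and tracking the extra factors of $c_2$ and $\Delta$ that this introduces.

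The first step is to establish the generalized Vajda identity
$$W_n W_{n+i+j} - W_{n+i} W_{n+j} = -(-c_2)^n\, \Delta\, U_i U_j,$$
which one can verify from the addition formula $W_{m+k} = U_{k+1} W_m + c_2 U_k W_{m-1}$ together with $U_{i+j+1} = U_{i+1} U_{j+1} + c_2 U_i U_j$, and which reduces to \eqref{Vajda} in the Fibonacci case (where $c_2 = 1$, $\Delta = 1$, and $U_i = F_i$). In the bookkeeping that follows, each application of this identity at index $m$ introduces a factor of $(-c_2)^m \Delta$ beyond what the Fibonacci proof produces.

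I would then induct on $d$. The case $d=1$ is immediate since $E(n,r,1) = W_n^{\langle r \rangle}$ and the sign, $c_2$, and $\Delta$ exponents all vanish. For $d=2$, the factorization
$$W_n^{\langle r \rangle} W_{n+2}^{\langle r \rangle} - \bigl(W_{n+1}^{\langle r \rangle}\bigr)^2 = W_{n+1}^{\langle r-1 \rangle} W_{n+2}^{\langle r-1 \rangle}\bigl(W_n W_{n+r+1} - W_{n+1} W_{n+r}\bigr)$$
together with the generalized Vajda identity gives $(-1)^{n+1} c_2^n \Delta\, U_r \cdot W_{n+1}^{\langle r-1 \rangle} W_{n+2}^{\langle r-1 \rangle}$, the claimed formula. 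For the induction step I would apply the Desnanot-Jacobi identity
$$E(n,r,d+1) = \frac{E(n,r,d)\, E(n+2,r,d) - E(n+1,r,d)^2}{E(n+2,r,d-1)},$$
substitute the induction hypotheses, and observe that the three factors appearing in the numerator carry identical squared sign, $c_2$-exponent, $\Delta$-exponent, and $U$-prefactor, so these pull out cleanly. The residual $W$-portion simplifies, exactly as in the proof of Theorem~\ref{main}, to a product of $W_m^{\langle r-d\rangle}$ terms times a single factor $W_{n+d-1} W_{n+r+d} - W_{n+r} W_{n+2d-1}$, to which the generalized Vajda identity applies at index $n+d-1$ and contributes $(-1)^{n+d} c_2^{n+d-1} \Delta\, U_d U_{r+1-d}$.

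The main obstacle is the bookkeeping, but it is quite constrained. The signs and the products of $U$-numbers accumulate exactly as in the Fibonacci proof; the only genuinely new checks are the exponents of $c_2$ and $\Delta$. The induction contributes $2(n+d-1)\binom{d}{2}$ to the $c_2$-exponent from the numerator, subtracts $(n+d-1)\binom{d-1}{2}$ from the denominator, and picks up one more $n+d-1$ from the single Vajda application. The combinatorial identity $2\binom{d}{2} - \binom{d-1}{2} + 1 = \binom{d+1}{2}$ then gives exactly $c_2^{(n+d-1)\binom{d+1}{2}}$, as required. The $\Delta$-exponent satisfies the same recursion $2\binom{d}{2} - \binom{d-1}{2} + 1 = \binom{d+1}{2}$, and all remaining steps repeat the proof of Theorem~\ref{main} verbatim.
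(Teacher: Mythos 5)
Your proposal is correct and takes exactly the paper's approach: the paper's own proof of this theorem is a one-line reduction to the argument for the Fibonacci case, with Vajda's identity replaced by $W_nW_{n+i+j}-W_{n+i}W_{n+j}=-(-c_2)^{n}\,\Delta\,U_iU_j$, which is precisely your plan. Your additional bookkeeping of the $c_2$ and $\Delta$ exponents via $2\binom{d}{2}-\binom{d-1}{2}+1=\binom{d+1}{2}$ checks out and supplies detail the paper omits.
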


\begin{proof}
The proof is similar to that of Theorem \ref{main}. However, instead of using the Vajda's identity, we use the following identity: \begin{equation} \label{eqWW}
W_nW_{n+i+j}-W_{n+i}W_{n+j} =  (-1) \cdot (-c_2)^{n} \cdot \Delta \cdot U_iU_j.
\end{equation}
which can be found in  the recent paper by Tangboonduangjit and Thanatipanonda \cite{AramThotsaporn}.
\end{proof}

\medskip

\noindent MSC2010: 11B39, 33C05

\end{document}